\newcommand{\T}{{\sf T}}
\newcommand{\Tg}{{\sf T_{\bullet}}}
\newcommand{\expg}{{\sf \exp_{\bullet}}}
\newcommand{\R}{\mathbb R}
\newcommand{\Z}{\mathbb Z}
\newcommand{\N}{\mathbb{N}}
\newcommand{\Id}{\mathrm{Id}}
\newtheorem{theorem}{\sc Theorem}[section]
\newtheorem{thm}[theorem]{\sc Theorem}
\newtheorem{lem}[theorem]{\sc Lemma}
\newtheorem{prop}[theorem]{\sc Proposition}
\newtheorem{cor}[theorem]{\sc Corollary}
\newtheorem*{con1'}{Conjecture 1'}
\theoremstyle{definition}
\newtheorem{rem}[theorem]{\sc Remark}
\begin{document}

\title[Generalized torsion elements]{Generalized torsion elements in infinite groups}
\author[Bastos]{Raimundo Bastos}
\address{Departamento de Matem\'atica, Universidade de Bras\'ilia,
Bras\'ilia-DF, 70910-900 Brazil}
\email{(Bastos) bastos@mat.unb.br }
\author[Mendon\c ca]{Luis Mendon\c ca}
\address{Departamento de Matem\'atica, Universidade Federal de Minas Gerais, Belo Horizonte-MG, 31270901 Brazil}
\email{(Mendon\c ca) luismendonca@mat.ufmg.br}

\subjclass[2010]{20E22, 20E45, 20F14, 20F50, 20F60}
\keywords{Bieberbach group; Generalized torsion elements; Promislow groups; Gupta-Sidki metabelian groups; Adyan groups}

\begin{abstract}
 A group element is called generalized torsion  if a finite product of its conjugates is equal to the identity.  We show that in a finitely generated abelian-by-finite group, an element is generalized torsion if and only if its image in the abelianization is torsion. We also prove a quantitative version with sharp bounds to the generalized exponent of these groups. In particular, we provide many examples of finitely presentable torsion-free groups in which all elements are generalized torsion. We also discuss positive generalized identities in abelian-by-finite groups and related classes.
\end{abstract}

%%% ----------------------------------------------------------------------
\maketitle
%%% 
%%%%%%%%%%%%%%%%%%%%%% SECTION

\section{Introduction}

An element $g$ of a group $G$ is said to be  {\it generalized torsion} if there exist $x_1, \ldots, x_k\in G$ such that \[g^{x_1} g^{x_2} \cdots g^{x_k} =1,\]
where $x^y$ denotes the conjugate $y^{-1}xy$.
We will denote by $\Tg(G)$ the set of all generalized torsion elements in $G$. The {\it generalized order} of $g \in \Tg(G)$, denoted by $o_{\bullet}(g)$, is defined to be the smallest  positive integer $k$ such $g^{x_1} g^{x_2} \cdots g^{x_k} =1$ for some $x_1,\ldots,x_k\in G$. We say that $G$ has {\it generalized exponent} $k$, writing $\exp_{\bullet}(G)=k$, if $\Tg(G) = G$  and $1 \in C^k$ for all conjugacy classes $C$ of $G$. Here $C^k = \{ c_1 \cdots c_k \mid c_i \in C\}$. We will reserve the notations $o(g)$, $\exp(G)$ and $\T(G)$ for the order of $g$, exponent of $G$ and torsion subset of $G$, respectively, all taken in the usual sense.

The existence of generalized torsion element is a well-known obstruction to bi-orderability: if $G$ has a bi-ordering $<$ then for every non-trivial element $g \in G$, $1 < g$ or $g < 1$ holds. If $1 < g$ then $1 = x^{-1}x < x^{-1}gx$ for all $x \in G$ hence products of conjugates of $x$ cannot be trivial. The case $g < 1$ is similar. So if $G$ is a bi-orderable group, then $\Tg(G) = 1$ (see \cite{Rhemtulla} for more details). In particular, residually torsion-free nilpotent groups, such as RAAGs and their subgroups, or free polynilpotent groups contain no nontrivial generalized torsion elements. In this direction it is important to emphasize that there are non-bi-orderable groups $G$ with $\Tg(G)=1$ (see \cite{Bludov} for an example). 

The subset $\Tg(G) \subseteq G$ is not in general a subgroup. It is not even closed under powers, in contrast to the usual torsion subset, although counter-examples to this seem to be rarer. In Section~\ref{sec.Adyan}, using a construction by Adyan \cite{Adyan}, we produce groups $G$ and generalized torsion elements $g \in G$ such that $\langle g \rangle$ contains central elements of infinite order, which cannot be generalized torsion.

Many authors have considered torsion-free groups with generalized torsion elements from a topological perspective (see \cite{Juan, NaylorRolfsen, Tera, MotegiTeragaito2017, Pams, ItoMotegiTeragaito2021, ItoMotegiTeragaito2023}). The first example of a torsion free group all whose elements are generalized torsion was produced by Gorchakov \cite{Gorcakov}. Gorjunski supplied then an example of a group with the same property but which is additionally finitely generated \cite{Gorjuskin}. The construction of Osin \cite{Osin} gives an example in much stronger sense: any countable torsion-free group is embeddable in a finitely generated torsion-free group having precisely two conjugacy classes. In such groups any element is conjugate to its inverse, thus producing equalities of the form $xx^g=1$, which means that Osin's groups are of generalized exponent $2$.

Away from Osin's example, one might ask about finitely presentable torsion-free groups that are generalized torsion. The group
\[ P = \langle x,y \mid x^{-1} y^2 x = y^{-2}, y^{-1} x^2 y = x^{-2}\rangle\]
has such properties, as reported in the answer to Question~3.11 in Kourovka's Notebook \cite{Kourovka}, where the argument is attributed to Churkin. Such group is known under many names such as Promislow group, Passman fours group, or the Hantzsche-Wendt group. It has attracted considerable attention recently, in connection to the negative answer by Gardam to Kaplansky's conjecture on units in group algebras \cite{Gardam}. The Promislow group can be written as an extension of a free abelian group of rank $3$, generated by $x^2$, $y^2$ and $(xy)^2$, by $C_2 \times C_2$, that is, $P$ is abelian-by-finite. For this class of groups, we prove the following result.

\begin{theorem}  \label{thm.abelianbyfinite}
Let $G$ be a finitely generated abelian-by-finite group. Then $\Tg(G) = \pi^{-1}( \T(G^{ab}))$, where $\pi \colon G \to  G^{ab}$ is the abelianization map and $\T(G^{ab})$ is the torsion subgroup of $G^{ab}$. In particular:
\begin{enumerate}
    \item $\Tg(G)$ is a normal subgroup of $G$,
    \item $\Tg(G) = G$ if and only if $G^{ab}$ is finite,
    \item Additionally, if $G$ is torsion-free, then $\Tg(G) = G$ if and only if  $Z(G)=1$. 
\end{enumerate}
\end{theorem}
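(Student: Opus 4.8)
The plan is to prove the two inclusions of the main equality separately. The inclusion $\Tg(G)\subseteq \pi^{-1}(\T(G^{ab}))$ is formal and holds in any group: if $g^{x_1}\cdots g^{x_k}=1$, then applying $\pi$ (under which all conjugates of $g$ have the same image) gives $\pi(g)^k=1$, so $\pi(g)\in\T(G^{ab})$. For the reverse inclusion I will use the structure of $G$. Being finitely generated and abelian-by-finite, $G$ contains a normal subgroup $A\cong\Z^d$ of finite index $m=[G:A]$; write $Q=G/A$ and let $\rho\colon G\to \gl_d(\Z)$ be the conjugation action of $G$ on $A$, which factors through $Q$. (For part (3) I will additionally take $A$ to be self-centralizing, which is possible when $G$ is torsion-free.)

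Two elementary tools drive the reverse inclusion. First, $\Tg(G)$ is closed under roots: if $g^s\in\Tg(G)$ then $g\in\Tg(G)$, since any relation $(g^s)^{y_1}\cdots (g^s)^{y_k}=1$ rewrites as $(g^{y_1})^s\cdots (g^{y_k})^s=1$, a product of conjugates of $g$. (This is the direction \emph{opposite} to closure under powers, which genuinely fails, as noted in the introduction.) Second, for $a\in A$ the conjugate $a^{t}$ depends only on the coset $tA$, so fixing a transversal $t_1,\dots,t_m$ of $A$ and setting $Sa:=a^{t_1}\cdots a^{t_m}$ — which in additive notation is the action of the norm element $\sum_{q\in Q}q\in\Z[Q]$ on $A$ — we see that $Sa=0$ already exhibits $a$ as a product of $m$ conjugates equal to $1$; thus $Sa=0\Rightarrow a\in\Tg(G)$. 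Combining the two tools reduces everything to one statement: given $g$ with $\pi(g)\in\T(G^{ab})$, choose $n$ with $g^n\in G'$ and let $r$ be the order of $gA$ in $Q$; then $g^{nr}=(g^r)^n=(g^n)^r$ lies in $A\cap G'$, and it suffices to show that $S$ annihilates $A\cap G'$, for then $g^{nr}\in\Tg(G)$ and the root property yields $g\in\Tg(G)$.

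The main obstacle is therefore the identity $S(A\cap G')=0$, and the clean route is the transfer (Verlagerung) homomorphism $\tau\colon G\to A/[A,A]=A$ attached to the finite-index subgroup $A$. Because $A$ is abelian, $\tau$ is a genuine homomorphism into the abelian group $A$, hence kills $G'$; on the other hand, the standard formula for the transfer of an element of $A$ reads $\tau(a)=a^{t_1}\cdots a^{t_m}=Sa$. Thus $Sa=\tau(a)=0$ for every $a\in A\cap G'$, which is exactly what was needed. This finishes the reverse inclusion and hence the equality $\Tg(G)=\pi^{-1}(\T(G^{ab}))$.

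Statements (1)--(3) then follow. For (1), $\T(G^{ab})$ is a normal subgroup of the abelian group $G^{ab}$, and the preimage of a normal subgroup under a homomorphism is normal. For (2), $\Tg(G)=G$ iff $\T(G^{ab})=G^{ab}$, i.e. iff the finitely generated abelian group $G^{ab}$ is all torsion, which happens iff it is finite. For (3) I would identify both relevant ranks with $\dim_{\mathbb{Q}}V^{Q}$, where $V=A\otimes\mathbb{Q}$ and $V^{Q}$ is the $Q$-fixed subspace. On one side, the five-term exact sequence of $1\to A\to G\to Q\to 1$ with rational coefficients (where $H_{>0}(Q;\mathbb{Q})=0$) gives $G^{ab}\otimes\mathbb{Q}\cong (A\otimes\mathbb{Q})_{Q}\cong V^{Q}$, so $G^{ab}$ is finite iff $V^{Q}=0$. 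On the other side, taking $A$ self-centralizing forces $\rho$ faithful and $Z(G)\le C_G(A)=A$; since $gag^{-1}=\rho(gA)\,a$ for $a\in A$, one obtains $Z(G)=A^{Q}=A\cap V^{Q}$, a free abelian group of rank $\dim_{\mathbb{Q}}V^{Q}$. Hence, for torsion-free $G$, $Z(G)=1$ iff $V^{Q}=0$ iff $G^{ab}$ is finite iff (by (2)) $\Tg(G)=G$. The only delicate points are the structural input that $A$ may be chosen free abelian and, in part (3), self-centralizing, together with the transfer computation above.
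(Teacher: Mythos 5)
Your proposal is correct, and its skeleton for the main equality is the same as the paper's: both arguments rest on the roots lemma (if some power of $g$ is generalized torsion, so is $g$), the norm $Sa = a^{t_1}\cdots a^{t_m}$ over a transversal, and the transfer homomorphism $\tau\colon G \to A$. Where you differ is in how the argument is closed, and your variant is slightly more general. The paper reduces to $g \in A$, observes that the norm $z = \prod_s g^s$ is \emph{central} with $\pi(z) = \pi(g)^{|G/A|}$ torsion, and then kills $z$ by a lemma of Farkas type proved via the transfer formula $\tau(z) = z^{|G:A|}$: a nontrivial element of $Z(G)\cap A$ has infinite order in $G^{ab}$ --- this is exactly where torsion-freeness of $A$ enters. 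You instead push the element all the way into $A \cap G'$ and then kill its norm with the single observation that $\tau$ lands in the abelian group $A$, hence annihilates $G'$, while restricting to the norm $S$ on $A$; this needs neither the centrality observation nor torsion-freeness of $A$, so your proof works verbatim for any finite-index normal abelian subgroup. A second genuine difference is item (3): the paper disposes of it by citing Hiller--Sah (trivial center iff finite abelianization for Bieberbach groups), whereas you reprove that equivalence from scratch via the five-term sequence with rational coefficients ($G^{ab}\otimes\mathbb{Q} \cong V^Q$) combined with $Z(G) = A^Q$ for a self-centralizing lattice. The two structural facts you assert without proof --- that $A$ may be chosen free abelian, and self-centralizing when $G$ is torsion-free --- are standard and are also used implicitly by the paper; for the record, the second follows quickly from Schur's theorem: $C_G(A)$ is torsion-free with center of finite index, hence has finite (so trivial) derived subgroup, and replacing $A$ by $C_G(A)$ gives a self-centralizing choice.
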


The result above applies in quite different situations. For instance, the generalized torsion subset of the Klein bottle group $\langle x, y \mid x^y = x^{-1} \rangle$
is the subgroup generated by $x$. For the infinite dihedral group $D_\infty$ item (2) applies and we find that $\Tg( D_\infty)  = D_\infty$. 

The groups as in item (3) above are precisely the fundamental groups of flat manifolds with trivial first Betti number. These are actually Bieberbach groups and their holonomy groups are \emph{primitive} in the sense of Hiller and Sah \cite{HillerSah}. Recall that a finite group is primitive if none of its Sylow subgroups that are cyclic admit a normal complement. As the class of primitive groups is quite vast, we find a correspondingly large family of Bieberbach groups of finite generalized exponent; see Corollary~\ref{cor:Bieber}.

The following result is a quantitative version of Theorem~\ref{thm.abelianbyfinite}.

\begin{theorem}  \label{thm.bounds}
Let $G$ be a finitely generated abelian-by-finite group, and let $A \leq G$ be a free abelian normal subgroup of finite index. If $G^{ab}$ is finite, then $\exp(G^{ab}) \leq \expg(G) \leq |G/A|$.
\end{theorem}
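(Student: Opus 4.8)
The plan is to prove the two inequalities separately. The lower bound $\exp(G^{ab})\le\expg(G)$ is elementary: I would push any relation $g^{x_1}\cdots g^{x_k}=1$ through the abelianization map $\pi$, and use that all the conjugates $g^{x_i}$ share the image $\pi(g)$, to obtain $\pi(g)^k=1$. Thus the order of $\pi(g)$ divides $o_\bullet(g)$, and hence divides $\expg(G)$, for every $g$; taking $g$ with $\pi(g)$ of maximal order in the finite abelian group $G^{ab}$ gives $\exp(G^{ab})\mid\expg(G)$, which is even stronger than the claimed inequality.

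For the upper bound I would show directly that $1\in C^{|G/A|}$ for every conjugacy class $C$ (that $\Tg(G)=G$ is Theorem~\ref{thm.abelianbyfinite}(2)). Write $Q=G/A$, $n=|Q|$, and let $\rho\colon Q\to\operatorname{Aut}(A)$ be the conjugation action, which is well defined because $A$ is abelian. Fix $g\in G$, let $q_0$ be its image in $Q$ and $m$ the order of $q_0$, so that $a_0:=g^m\in A$ and $\rho_{q_0}(a_0)=a_0$. Choosing representatives $s_1,\dots,s_\ell$ in $G$ for the $\ell=n/m$ left cosets of $\langle q_0\rangle$ in $Q$, I would form the product of the $\ell$ ``blocks'' $(g^{s_j})^m$. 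Each block is a product of $m$ conjugates of $g$ and, since $g^m\in A$, equals $\rho_{q_j}(a_0)\in A$, where $q_j$ is the image of $s_j$. As these factors all lie in the abelian group $A$, the whole product is the sum $\sum_{j=1}^{\ell}\rho_{q_j}(a_0)$, it is a product of exactly $\ell m=n$ conjugates of $g$, and it suffices to check that this sum vanishes.

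The crux, and the step I expect to be the main obstacle, is the vanishing $\sum_{j=1}^{\ell}\rho_{q_j}(a_0)=0$, which I would deduce from $A^Q=0$. Because $a_0$ is $\rho_{q_0}$-fixed, the summand $\rho_q(a_0)$ depends only on the coset $q\langle q_0\rangle$, so summing over all of $Q$ gives $\sum_{q\in Q}\rho_q(a_0)=m\sum_{j=1}^{\ell}\rho_{q_j}(a_0)$; the left-hand side is a $Q$-invariant element of $A$, hence lies in $A^Q$, and once $A^Q=0$ the torsion-freeness of $A$ lets me cancel the factor $m$ to conclude. It remains to prove $A^Q=0$ under the hypothesis that $G^{ab}$ is finite: here I would use the five-term exact sequence $H_2(Q)\to A_Q\to G^{ab}\to Q^{ab}\to 0$ attached to the extension $1\to A\to G\to Q\to1$, where $A_Q$ denotes the coinvariants. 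Since $Q$ is finite, $H_2(Q)$ is finite, and since $G^{ab}$ is finite the sequence forces $A_Q$ to be finite; tensoring with $\mathbb{Q}$ and invoking Maschke's theorem then gives $(\mathbb{Q}\otimes A)^Q=0$, whence $A^Q=0$ as $A$ is torsion-free. The passage from the naive norm over all of $Q$ (which uses $nm$ conjugates) down to exactly $n$ conjugates is precisely where torsion-freeness of $A$ is used, and this is what makes the bound come out as $|G/A|$ rather than $|G/A|\cdot\exp(Q)$; sharpness of both inequalities would then be recorded by examples in which the lower or the upper bound is attained.
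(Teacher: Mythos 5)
Your proof is correct, and its combinatorial core is the same as the paper's: the lower bound via pushing a product of conjugates through $\pi$ is exactly Proposition~\ref{prop_basic}(3), and for the upper bound both you and the paper (Proposition~\ref{upperbound.genexp}) form, for $m$ the order of $gA$ in $Q=G/A$, the element $z=\prod_{s\in S}(g^m)^s$ with $S$ a transversal for $A\langle g\rangle$ in $G$ --- a product of exactly $m\cdot|S|=|G/A|$ conjugates of $g$ lying in $A$ --- and both reduce the vanishing of $z$, using torsion-freeness of $A$ to cancel the factor $m$, to the vanishing of the full norm $\prod_{t\in T}(g^m)^t$ of $g^m$ taken over a transversal $T$ for $A$ in $G$. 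The genuine difference is how that norm is killed. The paper observes that it is a central element of $A$ whose image in $G^{ab}$ is torsion, and invokes the transfer-based Lemma~\ref{transfer} (a nontrivial element of $Z(G)\cap A$ has infinite order in $G^{ab}$), already established for Theorem~\ref{Thm.ab.by.finite}; you instead prove the equivalent fact $A^{Q}=0$ homologically, via the five-term sequence $H_2(Q)\to A_Q\to G^{ab}\to Q^{ab}\to 0$ (forcing $A_Q$ to be finite) followed by rationalization and Maschke to pass from $(\mathbb{Q}\otimes A)_Q=0$ to $(\mathbb{Q}\otimes A)^{Q}=0$, hence $A^{Q}=0$ as $A$ embeds in $\mathbb{Q}\otimes A$. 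Both routes are sound: the transfer argument is lighter and yields the stronger statement (valid without assuming $G^{ab}$ finite) that the paper also needs for Theorem~\ref{Thm.ab.by.finite}, whereas your argument imports standard homological machinery but is self-contained and, as a bonus, re-proves $\Tg(G)=G$ explicitly, so your citation of Theorem~\ref{thm.abelianbyfinite}(2) is not actually needed. One small convention slip to repair: with the paper's notation $a^{g}=g^{-1}ag$, the map $q\mapsto a_0^{q}$ is constant on \emph{right} cosets $\langle q_0\rangle q$ (since $a_0^{q_0^i q_j}=(a_0^{q_0^i})^{q_j}=a_0^{q_j}$), not on left cosets $q\langle q_0\rangle$ as written; taking your $s_j$ to represent right cosets of $A\langle g\rangle$ (which is what the identity $m\sum_j a_0^{s_j}=\sum_{q\in Q}a_0^{q}$ requires) fixes this with no change to the count $\ell m=|G/A|$.
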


Both bounds are attained for the Promislow group $P$. More generally, the bounds are sharp for the Gupta--Sidki metabelian groups $K(p)$ \cite{GuptaSidki1999}. These groups are defined for all primes $p$ as generalizations of the Promislow group, which is isomorphic to $K(2)$. A further generalization is given by the groups $K(p^n,p^m)$ defined by Cid \cite{Cid}. Here $p$ may be any prime and $n$ and $m$ any positive integers. With $n=m=1$ one has $K(p^1,p^1) \simeq K(p)$. All these groups are Bieberbach groups, so we may take the subgroup $A$ in the theorem to be their translation subgroups. We discuss Bieberbach groups and these examples in Section~\ref{sec.ab.by.finite}.

\begin{cor} Let $p$ be a prime. Then \[\expg(K(p^n,p^m)) = p^{n+m}.\] In particular, $\expg(P) = 4$ and $\expg(K(p)) = p^2$.
 \end{cor}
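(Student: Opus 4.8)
The plan is to pin down the exact value by squeezing $\expg(K(p^n,p^m))$ between the two bounds of Theorem~\ref{thm.bounds} and arranging that both bounds coincide at $p^{n+m}$. Write $K = K(p^n,p^m)$ and let $A \le K$ be its translation subgroup, which is free abelian of finite index and normal (this is exactly what makes $K$ a Bieberbach, hence finitely generated abelian-by-finite, group, as recorded in the excerpt). Before invoking the quantitative bound I would first check that $K^{ab}$ is finite; granting this, Theorem~\ref{thm.abelianbyfinite}(2) gives $\Tg(K) = K$, so that $\expg(K)$ is defined in the first place, and Theorem~\ref{thm.bounds} then yields
\[ \exp(K^{ab}) \le \expg(K) \le |K/A|. \]
Hence it suffices to establish the two equalities $\exp(K^{ab}) = p^{n+m}$ and $|K/A| = p^{n+m}$.

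For the upper bound I would compute the holonomy group $H = K/A$ directly from the defining presentation of $K(p^n,p^m)$ recalled in Section~\ref{sec.ab.by.finite} and verify that $|H| = p^{n+m}$; in the model case of the Promislow group $P = K(2) = K(2^1,2^1)$ this is the familiar fact that $H \cong C_2 \times C_2$ has order $4 = 2^{1+1}$, and the general computation should proceed analogously with the two ``squaring'' relations replaced by the $p^n$- and $p^m$-power relations. For the lower bound I would abelianize the presentation and identify $K^{ab}$ explicitly, showing its exponent is exactly $p^{n+m}$; again the Promislow case $P^{ab} \cong C_4 \times C_4$ of exponent $4 = 2^{1+1}$ is the guiding template, the key mechanism being that the elements of order $p^{n+m}$ in $K^{ab}$ arise from the interaction of the translation generators with the holonomy action, rather than from the quotient $H$ itself (whose exponent is strictly smaller). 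Combining these two computations with the displayed inequalities forces $\expg(K(p^n,p^m)) = p^{n+m}$, and the stated special cases $\expg(P) = 4$ and $\expg(K(p)) = p^2$ then follow by setting $n = m = 1$ (and $p = 2$ for $P$).

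The main obstacle is the pair of explicit computations described above, carried out uniformly in the prime $p$ and the parameters $n$ and $m$: one must determine the order of the holonomy quotient and the precise isomorphism type (or at least the exponent) of $K^{ab}$ from the presentation of $K(p^n,p^m)$. These are the only nonformal inputs, since everything else is a direct application of Theorems~\ref{thm.abelianbyfinite} and~\ref{thm.bounds}; the technical heart of the argument is therefore confirming that the two bounds of Theorem~\ref{thm.bounds} genuinely meet at the common value $p^{n+m}$ for this whole family, so that the squeeze is exact rather than merely confining $\expg(K)$ to an interval.
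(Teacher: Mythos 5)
Your proposal is correct and takes essentially the same route as the paper: its proof of this corollary (Proposition~\ref{expg.Cid}) is exactly the squeeze of Theorem~\ref{thm.bounds}, getting the upper bound $p^{n+m}$ from the holonomy order via Proposition~\ref{upperbound.genexp} and the lower bound from elements of order $p^{n+m}$ in the abelianization via Proposition~\ref{prop_basic}(3). The only difference is that the two structural facts you flag as ``the main obstacle'' --- holonomy $C_{p^n}\times C_{p^m}$ and abelianization $C_{p^{n+m}}\times C_{p^{n+m}}$ --- are not computed in the paper either, but quoted from Gupta--Sidki and Cid, so no presentation-level computation is actually needed.
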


It is not clear which integers occur as the generalized exponent of a finitely presented torsion-free group. For partial results, see Proposition~\ref{powerful}, where in particular we show that any square occurs in that situation.

In a similar direction, we say that a group $G$ satisfies a \emph{positive generalized identity} of degree $n$ if there exist fixed elements $x_1, \ldots, x_n \in G$ such that 
\[g^{x_1} g^{x_2} \cdots g^{x_n} =1\]
for all $g \in G$. Following Endimioni \cite{Endimioni2006}, we denote by  $\widehat{\mathcal{B}}_n$ the class of all groups satisfying such an identity.

We interpret our results for abelian-by-finite groups in terms of positive identities in Section~\ref{sec.positive.id}. The proof of Theorem~\ref{thm.abelianbyfinite} actually shows that an abelian-by-finite group satisfies a positive generalized identity if and only if it is of finite generalized exponent; see Corollary~\ref{cor.strongexp} and Theorem~\ref{BHat}.

Endimioni's results imply that groups in $\widehat{\mathcal{B}}_n$ have quite restricted structure for small values of $n$. For instance, all finitely generated groups in $\widehat{\mathcal{B}}_4$ are abelian-by-finite. On the other hand, an example  by Casolo (see \cite[Section~5]{Endimioni2006}) shows that there is a finitely generated solvable group in  $\widehat{\mathcal{B}}_8$ that is not polycyclic. Building on this example, we produce a family of finitely generated torsion-free groups which satisfy positive generalized identities, but are not polycyclic. Additionally, these examples can be chosen to be solvable of arbitrarily large derived length.

\section{Preliminaries}

In the next result we collect some of the basic properties of generalized torsion elements (see \cite[Proposition 2.2]{BSS} for more details).

\begin{prop} \label{prop_basic}
 Let $G$ be a group and let $g \in G$. 
\begin{enumerate}
    \item If $K$ is a group and $\varphi \colon G \to K$ is a homomorphism, then $\varphi(\Tg(G)) \subseteq \Tg(K)$. 
   \item If $x  \in \Tg(G) \cap Z(G)$, then $x \in \T (G)$. In particular, if $G$ is abelian then $\Tg(G) = \T (G)$. 

    \item The order of $gG'$ in $G^{ab}$ divides the generalized order of $g$ in $G$.   
\end{enumerate} 
\end{prop}

It is not true in general that if $g \in \Tg(G)$, then $g^n \in \Tg(G)$ (see Proposition \ref{prop:Adyan}, below). In the opposite direction, one has the following.

\begin{lem} \label{powers}
If $g^{y_1} \cdots g^{y_n} \in \Tg(G)$, then $g \in \Tg(G)$. In particular, if $g^n \in \Tg(G)$ for some $n$, then $g \in \Tg(G)$.
\end{lem}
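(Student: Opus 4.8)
The plan is to exploit the single structural fact that conjugation is an automorphism, so it distributes over products. Write $h = g^{y_1} \cdots g^{y_n}$, a product of $n$ conjugates of $g$. By hypothesis $h \in \Tg(G)$, so by definition there exist elements $z_1, \ldots, z_k \in G$ with
\[
h^{z_1} h^{z_2} \cdots h^{z_k} = 1.
\]
The whole point is to rewrite the left-hand side as a product of conjugates of $g$ itself, which then witnesses $g \in \Tg(G)$.

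The key step I would carry out is the observation that for each $i$,
\[
h^{z_i} = \bigl(g^{y_1} \cdots g^{y_n}\bigr)^{z_i} = g^{y_1 z_i} \, g^{y_2 z_i} \cdots g^{y_n z_i},
\]
since conjugation by $z_i$ is an automorphism and $x^{y z} = (x^y)^z$. Substituting this expression for each factor $h^{z_i}$ into the relation above expresses $1$ as a product of exactly $nk$ conjugates of $g$, namely the $g^{y_j z_i}$ for $1 \le j \le n$ and $1 \le i \le k$, taken in the appropriate order. This is precisely the statement that $g \in \Tg(G)$, completing the first assertion.

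For the "in particular" clause, I would simply note that $g^n$ is itself a product of $n$ conjugates of $g$: take $y_1 = \cdots = y_n = 1$, so that $g^{y_1} \cdots g^{y_n} = g^n$. Thus if $g^n \in \Tg(G)$ for some $n$, the general statement applies directly and yields $g \in \Tg(G)$.

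I do not expect any genuine obstacle here; the argument is essentially formal, resting only on the multiplicativity of conjugation and the closure of the relation "is a product of conjugates of $g$ equal to $1$" under substituting one such relation into another. The one point requiring mild care is the bookkeeping of indices when flattening the double product $\prod_i \prod_j g^{y_j z_i}$ into a single product of conjugates, but no cancellation or reordering is needed, so this is routine.
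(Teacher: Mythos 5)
Your proof is correct and is essentially identical to the paper's: both expand each conjugate $(g^{y_1}\cdots g^{y_n})^{z_i}$ into $g^{y_1 z_i}\cdots g^{y_n z_i}$ using the fact that conjugation is an automorphism, thereby exhibiting $1$ as a product of $nk$ conjugates of $g$. The ``in particular'' clause is handled the same way in both, by viewing $g^n$ as the product of conjugates with $y_1=\cdots=y_n=1$.
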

\begin{proof}
By definition, there exist $x_1, \ldots, x_k \in G$  such that \[1 = (g^{y_1} \cdots g^{y_n} )^{x_1} \cdots (g^{y_1} \cdots g^{y_n} )^{x_k} = g^{y_1 x_1} \cdots g^{y_n x_1} g^{y_1x_2} \cdots g^{y_n x_k},\]
so $g$ is a generalized torsion element.
\end{proof}

\begin{lem} \label{extension}
Any extension of generalized torsion groups is generalized torsion.
\end{lem}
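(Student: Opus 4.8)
The plan is to take an arbitrary element of the total group, push it down to the quotient, and then climb back up using the two hypotheses together with Lemma~\ref{powers}. Write $N \trianglelefteq G$ for the normal subgroup, and suppose both $N$ and $G/N$ are generalized torsion (that is, $\Tg(N) = N$ and $\Tg(G/N) = G/N$). Fix an arbitrary $g \in G$; the goal is to show $g \in \Tg(G)$.

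First I would pass to the quotient. The image $\bar g = gN$ lies in $\Tg(G/N) = G/N$, so there are cosets $\bar x_1, \ldots, \bar x_k$ with $\bar g^{\bar x_1} \cdots \bar g^{\bar x_k} = \bar 1$ in $G/N$. Choosing arbitrary lifts $x_1, \ldots, x_k \in G$ of these cosets, the product $h = g^{x_1} \cdots g^{x_k}$ maps to $\bar 1$ under the projection $G \to G/N$, and hence $h \in N$.

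Next I would invoke that $N$ is generalized torsion: $h \in N = \Tg(N)$. Applying Proposition~\ref{prop_basic}(1) to the inclusion homomorphism $N \hookrightarrow G$ gives $\Tg(N) \subseteq \Tg(G)$, so $h \in \Tg(G)$. In other words $g^{x_1} \cdots g^{x_k} \in \Tg(G)$, which is exactly the hypothesis of Lemma~\ref{powers} (with the lifts $x_i$ playing the role of the $y_i$). That lemma then yields $g \in \Tg(G)$, and since $g$ was arbitrary we conclude $\Tg(G) = G$.

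I do not expect a serious obstacle here. The only subtle point is that the chosen lifts $x_i$ need not satisfy any relation in $G$, so the lifted product $g^{x_1} \cdots g^{x_k}$ is generally nontrivial and merely lands in $N$ rather than equalling $1$. This is precisely the ``slack'' that Lemma~\ref{powers} is designed to absorb, so the argument goes through once the lemma is applied in its general form rather than just the special case $g^n \in \Tg(G)$.
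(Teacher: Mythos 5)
Your proof is correct and follows essentially the same route as the paper's: project to $G/N$, lift the witnesses, observe the product of conjugates lands in $N$, and absorb the slack with Lemma~\ref{powers}. The only difference is cosmetic — you justify $\Tg(N) \subseteq \Tg(G)$ explicitly via Proposition~\ref{prop_basic}(1) applied to the inclusion, a step the paper leaves implicit.
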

\begin{proof}
Let $Q = G/N$, where both $N$ and $Q$ are generalized torsion. For arbitrary $g \in G$, we may find $x_1, \ldots, x_n \in G$ such that 
\[ (gN)^{x_1N} \cdots (gN)^{x_nN} = 1,\]
so $g^{x_1} \cdots g^{x_n} \in N$, and by hypothesis it is a generalized torsion element. Applying Lemma~\ref{powers}, we find that $g$ is generalized torsion.
\end{proof}

The proof above also says that if $\expg(N) = n$ and $\expg(Q)=q$, then $\expg(G) \leq nq$.

\subsection{Adyan's groups}
\label{sec.Adyan}
The set of all generalized torsion elements in a group is not well-behaved. We will present here an example related to the Burnside problem, which shows that the converse of Lemma~\ref{powers} does not hold.  For integers $m \geq 2$ and $n \geq 665$ odd, Adyan constructed in \cite{Adyan} a torsion-free group $A(m,n)$ with the property that  the intersection of any two non-trivial subgroups is non-trivial. Such group is a central extension of the form
\[ 1 \to \Z \to A(m,n) \to B(m,n) \to 1,\]
where $B(m,n)$ is the free Burnside group on $m$ generators and exponent $n$.

\begin{prop} \label{prop:Adyan}
Let $G = A(m,n)$ with $m \geq 2$ and $n \geq 665$ odd.
\begin{enumerate}
    \item If $1 \neq \alpha \in \Tg(G)$, then $\alpha^n \not\in \Tg(G)$. In particular, $\Tg(G)$ need not be a subgroup.
    \item The set of all commutators, $\{[x,y] \mid x,y \in G\}$, is contained in the set of generalized torsion elements.
\end{enumerate}
\end{prop}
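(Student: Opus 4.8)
The plan is to treat the two items separately, using only the central extension structure $1 \to \Z \to G \to B(m,n) \to 1$ together with the fact that $B(m,n)$ has exponent $n$. The one structural fact I will invoke throughout is that $g^n \in \Z$ for every $g \in G$: indeed $\pi(g)^n = 1$ in $B(m,n)$ forces $g^n \in \ker \pi = \Z$, and since $\Z$ is central this puts $g^n \in Z(G)$; moreover $g^n \neq 1$ whenever $g \neq 1$, because $G$ is torsion-free.

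For item (2) the crux is to write down an explicit product of conjugates of $c = [x,y]$ equal to the identity, and this is the step I expect to be the main obstacle. Writing $c = x^{-1} x^y$, I would conjugate by successive powers of $y$ and record the telescoping identity $c^{y^i} = (x^{y^i})^{-1} x^{y^{i+1}}$. Multiplying these for $i = 0, \ldots, n-1$ collapses to $\prod_{i=0}^{n-1} c^{y^i} = x^{-1} x^{y^n} = [x, y^n]$. Because $y^n \in Z(G)$, the commutator $[x, y^n]$ is trivial, so this product of $n$ conjugates of $c$ equals the identity; hence $c \in \Tg(G)$, with $o_\bullet(c) \le n$. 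Spotting that conjugation by $y$ makes the product telescope, and that centrality of $y^n$ annihilates the surviving boundary term $x^{-1}x^{y^n}$, is the genuinely clever point; the rest is formal.

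For item (1) I would argue by contradiction via Proposition~\ref{prop_basic}(2). Given $1 \neq \alpha \in \Tg(G)$, the element $\alpha^n$ lies in $Z(G)$ and is nontrivial by torsion-freeness. Were $\alpha^n$ also generalized torsion, then $\alpha^n \in \Tg(G) \cap Z(G)$ would be an element of finite order, contradicting that $G$ is torsion-free; hence $\alpha^n \notin \Tg(G)$. To deduce that $\Tg(G)$ need not be a subgroup, I would combine the two items: since $B(m,n)$ is nonabelian for $m \ge 2$, there exist $x,y \in G$ with $[x,y] \neq 1$, and then $[x,y]$ is a nontrivial generalized torsion element by item (2) whose $n$-th power is not generalized torsion by item (1). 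Thus $\Tg(G)$ is not closed under powers, so it is not a subgroup.

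I do not expect to need Adyan's intersection property anywhere in this argument; the Burnside exponent and torsion-freeness already yield that the cyclic subgroup $\langle c \rangle$ contains the central element $c^n$ of infinite order. This is precisely the phenomenon advertised in the introduction: a generalized torsion element whose cyclic subgroup contains a central element of infinite order, which cannot itself be generalized torsion.
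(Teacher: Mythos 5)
Your proof is correct and follows essentially the same route as the paper: item (1) via Proposition~\ref{prop_basic}(2) applied to the nontrivial central element $\alpha^n$, and item (2) by exhibiting a product of $n$ conjugates of $[x,y]$ that equals the trivial commutator of $x$ with a central $n$-th power. The only cosmetic difference is that you telescope $[x,y^n]$ by conjugating with powers of $y$, while the paper expands $[x^n,y]$ by conjugating with powers of $x$ --- the same identity read from the other side.
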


\begin{proof}
Denote by $N$ the central subgroup with $G/N = B(m,n)$.
\begin{enumerate}
    \item It is sufficient to prove that $\alpha^n \not\in \Tg(G)$. Since $G$ is torsion-free, we deduce that $\alpha^n \neq 1$. Moreover, by construction, $1 \neq \alpha^n \in Z(G)$ and this element cannot be a generalized torsion element (by Proposition~\ref{prop_basic}~(2)). 
    \item Choose arbitrary elements \(x, y \in G\). If \(x\) and \(y\) commute, then \(1 = [x,y] \in \Tg(G)\). Now, consider two elements \(x, y \in G\) with \([x,y] \neq 1\). Since \(x^n \in N \subseteq Z(G)\), it follows that \( [x^n, y] = 1 \), and thus, \[[x^n,y] = [x,y]^{x^{n-1}} [x,y]^{x^{n-2}} \cdots [x,y]^x [x,y]=1.\]
Consequently, $[x,y]$ is a generalized torsion element. \qedhere
\end{enumerate}
\end{proof}

\section{Abelian-by-finite groups} \label{sec.ab.by.finite}

The following is an immediate consequence of the proof of \cite[Theorem~22]{Farkas}. We supply the proof for the convenience of the reader.

\begin{lem} \label{transfer}
Let $G$ be a finitely generated abelian-by-finite group. Assume that $A$ is a torsion-free abelian subgroup of finite index. If $1 \neq z \in Z(G) \cap A$, then $zG'$ has infinite order in $G^{ab}$.
\end{lem}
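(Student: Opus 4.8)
The plan is to exploit the transfer homomorphism (Verlagerung) associated to the finite-index subgroup $A$. Write $n = |G/A|$. Since $A$ is abelian we have $A^{ab} = A$, so the transfer is a homomorphism $V \colon G \to A$. As $V$ takes values in an abelian group, it kills every commutator, so $G' \subseteq \ker V$; equivalently $V$ factors through the abelianization as $V = \bar V \circ \pi$ for some $\bar V \colon G^{ab} \to A$. This is exactly the feature of the transfer I want to play off against torsion-freeness.

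The key computation is to evaluate $V$ on the central element $z$. Choosing right-coset representatives $t_1, \ldots, t_n$ with $G = \bigsqcup_{i} A t_i$, for each $i$ one has $t_i z = z t_i$ because $z \in Z(G)$, and $t_i z t_i^{-1} = z \in A$. Hence the coset permutation attached to $z$ is the identity and every local factor of the transfer equals $z$, which gives the clean formula $V(z) = z^n$.

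Finally I would argue by contradiction. Suppose $zG'$ had finite order $m$ in $G^{ab}$, that is $z^m \in G'$. Applying $V$ and using $G' \subseteq \ker V$ together with the computation above yields $1 = V(z^m) = V(z)^m = z^{nm}$. But $z \neq 1$ lies in the torsion-free abelian group $A$, so $z^{nm} \neq 1$, a contradiction. Therefore $zG'$ has infinite order in $G^{ab}$, as claimed.

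The main obstacle is simply the correct set-up and evaluation of the transfer: one must fix the left/right coset conventions and verify that, for a central element already contained in $A$, the transfer collapses to $V(z) = z^n$. Once this standard fact is established, torsion-freeness of $A$ immediately forbids any nontrivial power of $z$ from falling into $G'$, and the lemma follows.
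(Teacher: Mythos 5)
Your proof is correct and is essentially the paper's own argument: both use the transfer homomorphism $G \to A$, the fact that central elements of $A$ transfer to their $n$-th power (which the paper cites from Robinson's book, 10.1.2, and you verify directly), and torsion-freeness of $A$ to conclude that no power of $z$ can lie in $G'$. The only cosmetic difference is that you phrase the conclusion as a contradiction, while the paper notes directly that a homomorphism to an abelian group sending $z$ to an element of infinite order forces $zG'$ to have infinite order.
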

\begin{proof}
It is enough to find a group homomorphism $\tau \colon G \to B$, where $B$ is an abelian group and $\tau(z)$ has infinite order.

Set $n=|G:A|$. Let $\tau \colon G \to A$ be the transfer homomorphism associated to the identity $\Id_A \colon A \to A$. According to \cite[10.1.2]{Rob}, if $z \in Z(G)$, then $\tau(z)=z^n$, which is of infinite order if $1 \neq z \in A$.
\end{proof}

\begin{theorem}  \label{Thm.ab.by.finite}
Let $G$ be a finitely generated abelian-by-finite group. Then $\Tg(G) = \pi^{-1}( \T(G^{ab}))$, where $\pi \colon G \to  G^{ab}$ is the abelianization map and $\T(G^{ab})$ is the torsion subgroup of $G^{ab}$. 
\end{theorem}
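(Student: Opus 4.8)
The plan is to prove both inclusions of $\Tg(G) = \pi^{-1}(\T(G^{ab}))$.

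One inclusion is essentially free. If $g \in \Tg(G)$, then by Proposition~\ref{prop_basic}~(3) the order of $gG'$ in $G^{ab}$ divides the generalized order of $g$, so $\pi(g)$ is torsion; equivalently $\Tg(G) \subseteq \pi^{-1}(\T(G^{ab}))$. (One can also see this directly: applying the abelianization homomorphism to a relation $g^{x_1}\cdots g^{x_k}=1$ gives $\pi(g)^k = 1$.) So the entire content lies in the reverse inclusion: if $\pi(g)$ is torsion, then $g$ is generalized torsion.

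For the reverse inclusion, I would first reduce to a situation where I can exploit the abelian-by-finite structure together with Lemma~\ref{transfer}. Fix a torsion-free abelian normal subgroup $A \leq G$ of finite index $n = |G:A|$ (one exists in any finitely generated abelian-by-finite group, replacing $A$ by its core and its torsion-free part if necessary). Suppose $\pi(g)$ is torsion, so some power $g^m$ lies in $G'$. By Lemma~\ref{powers}, it suffices to show that $g^m \in \Tg(G)$ — or more directly, to show that any element of $G'$ whose relevant central part is trivial is generalized torsion, and then to handle $g$ itself. The key structural idea is to produce a product of conjugates of $g$ that lands in $Z(G) \cap A$, and then to argue that this central element must be trivial, so that the product of conjugates is already $1$.

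The main obstacle — and the heart of the argument — is precisely this last step: controlling where a product of conjugates of $g$ lands inside the center. Here is where Lemma~\ref{transfer} does the real work. The strategy is: take the product $h = \prod_{x} g^{x}$ over a suitable (finite) set of conjugates engineered so that $h$ is central and lies in $A$; concretely one can average over coset representatives of $A$ in $G$ and use that $g^m \in G'$ together with the commutator relations to force $h \in Z(G) \cap A$. If $h \neq 1$, then Lemma~\ref{transfer} says $\pi(h) = hG'$ has \emph{infinite} order in $G^{ab}$. But $h$ is a product of conjugates of $g$, so $\pi(h)$ is a power of $\pi(g)$, which is torsion by hypothesis — a contradiction. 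Therefore $h = 1$, exhibiting $g$ (via Lemma~\ref{powers}) as a generalized torsion element. The delicate points to get right are the explicit choice of conjugating elements guaranteeing both centrality and membership in $A$, and verifying that $\pi(h)$ is genuinely a nontrivial power of the torsion element $\pi(g)$ so that the infinite-order conclusion of Lemma~\ref{transfer} yields the desired contradiction.
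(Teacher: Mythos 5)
Your first direction and your overall strategy --- build a product of conjugates of $g$ lying in $Z(G) \cap A$, then use Lemma~\ref{transfer} plus the torsion hypothesis on $\pi(g)$ to force that element to be trivial --- are exactly the paper's. But the step you defer as ``delicate'' (engineering both centrality and membership in $A$) is precisely where your outline has a genuine gap, and the mechanism you suggest for closing it would fail. You reduce via the hypothesis to ``$g^m \in G'$'', but membership of a power of $g$ in the derived subgroup is the wrong reduction: $G'$ need not be contained in $A$, and knowing $g^m \in G'$ gives no control over the product $\prod_s g^s$ taken over coset representatives. For a general $g \notin A$ that product is neither well defined independently of the ordering (the factors need not commute) nor central: conjugating by $y \in G$ sends $g^{x_i}$ to $(g^{a_i})^{x_{\sigma(i)}}$, where $x_i y = a_i x_{\sigma(i)}$ with $a_i \in A$, and $g^{a_i} \neq g$ in general. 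No appeal to ``commutator relations'' repairs this.

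The missing idea is a different, simpler reduction: since $A$ has \emph{finite index}, some power $g^k$ lies in $A$ (this uses nothing about $G'$); $\pi(g^k)$ is still torsion, and by Lemma~\ref{powers} it suffices to prove $g^k \in \Tg(G)$, so one may assume $g \in A$ outright. Once $g \in A$, everything you wanted becomes automatic: for coset representatives $1, x_2, \ldots, x_n$ of $A$ in $G$, each factor $g^{x_i}$ lies in the abelian normal subgroup $A$, so the product $z = g\, g^{x_2} \cdots g^{x_n}$ is unambiguous, and conjugation by any $y \in G$ merely permutes its factors (because $g^{a x_i} = g^{x_i}$ for $a \in A$); hence $z \in Z(G) \cap A$. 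Then, as you correctly say, $\pi(z) = \pi(g)^n$ is torsion, while Lemma~\ref{transfer} would make $\pi(z)$ of infinite order if $z \neq 1$; hence $z = 1$, exhibiting $g$ as generalized torsion. A small additional point: you worry about $\pi(h)$ being a ``genuinely nontrivial power'' of $\pi(g)$ --- no nontriviality is needed, since torsion of $\pi(h)$ alone already contradicts the infinite-order conclusion. So your skeleton coincides with the paper's proof, but as proposed the argument is incomplete at its central step, and it is closed exactly by the reduction to $g \in A$.
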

\begin{proof}
It is clear that if $g \in \Tg(G)$, then its image in $G^{ab}$ is torsion. 

For the converse, let $A \leq G$ be a free abelian normal subgroup of finite index of $G$. Let $g \in G$ and assume that $\pi(g)$ has finite order. For some $k\geq 1$ we have $g^k \in A$, and if $g^k$ is generalized torsion, then so is $g$ by Lemma~\ref{powers}. Moreover $\pi(g^k)$ has finite order too, so we may assume without loss of generality that $g \in A$. 

Let $1, x_2, \ldots, x_n$ be a set of representatives of the cosets of $A$ in $G$. Then $z = gg^{x_2} \cdots g^{x_n}$ is a central element of $G$ and $\pi(z)= \pi(g)^n$ has finite order. By Lemma~\ref{transfer}, $z = 1$. So $g$ is generalized torsion.
\end{proof}

In the case where $G^{ab}$ is finite, the proof above shows that $G$ is of generalized exponent at most $\exp(G/A) \cdot |G/A|$. A more careful analysis leads to the following improvement.

\begin{prop}  \label{upperbound.genexp}
Let $G$ be a finitely generated abelian-by-finite group. Assume that $A$ is a torsion-free abelian normal subgroup of finite index. If $G^{ab}$ is finite, then $G$ has generalized exponent at most $|G/A|$.
 \end{prop}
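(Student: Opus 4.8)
The plan is to verify directly the two requirements in the definition of generalized exponent: that $\Tg(G) = G$, and that $1 \in C^{n}$ for every conjugacy class $C$ of $G$, where $n := |G/A|$. The first is immediate from Theorem~\ref{Thm.ab.by.finite}: since $G^{ab}$ is finite we have $\T(G^{ab}) = G^{ab}$, so $\Tg(G) = \pi^{-1}(G^{ab}) = G$. Thus the whole content lies in producing, for an arbitrary $g \in G$, an expression of the identity as a product of at most $n$ conjugates of $g$. To do this I would reuse the transfer homomorphism $\tau \colon G \to A$ attached to $\Id_A$ from Lemma~\ref{transfer}, but now evaluate it at arbitrary elements rather than central ones.

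First I would observe that $\tau$ is in fact the trivial map. Indeed, $A$ is abelian, so $\tau$ factors through $G^{ab}$; its image is therefore a finite subgroup of the torsion-free group $A$, hence trivial, and $\tau(g) = 1$ for every $g \in G$. Next I would compute $\tau(g)$ through the cycle decomposition of the permutation that right multiplication by $g$ induces on the coset space $G/A$. Fixing a right transversal $t_1, \ldots, t_n$ and writing $t_i g = a_i t_{\sigma(i)}$ with $a_i \in A$, one has $\tau(g) = \prod_i a_i$, and a telescoping over each cycle $(i, \sigma(i), \ldots)$ of length $\ell$ produces the factor $t_i g^{\ell} t_i^{-1} = (g^{\ell})^{t_i^{-1}} \in A$. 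Because $A$ is normal, $\sigma$ is precisely right multiplication by $\bar g = gA$ on $G/A$, whose orbits are the left cosets $\bar t_i \langle \bar g\rangle$; hence every cycle has the common length $d = o(\bar g)$ and there are exactly $r = n/d$ of them. Writing $u_1, \ldots, u_r$ for the conjugating elements coming from the cycle representatives, this gives $\tau(g) = \prod_{j=1}^{r} (g^{d})^{u_j}$.

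Combining the two steps, $\prod_{j=1}^{r} (g^{u_j})^{d} = \tau(g) = 1$. Expanding each $(g^{u_j})^{d}$ as a product of $d$ copies of the conjugate $g^{u_j}$ exhibits the identity as a product of $rd = n$ conjugates of $g$, so $o_\bullet(g) \le n$ and $1 \in C^{n}$. Together with $\Tg(G) = G$ this yields $\expg(G) \le |G/A|$. The delicate point, and the only place where care is needed, is the bookkeeping in the cycle formula: one must keep the left/right conventions consistent so that the factors are genuine conjugates of the \emph{positive} power $g^{d}$ rather than of $g^{-d}$, and confirm that all cycles truly share the length $d$. Once $\tau \equiv 1$ is established, no estimation is involved and the bound $n$ falls out exactly, which is what sharpens the earlier $\exp(G/A)\cdot|G/A|$ estimate down to $|G/A|$.
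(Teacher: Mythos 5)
Your proof is correct, and it takes a genuinely different route from the paper's, although both run on the same engine, the transfer homomorphism $\tau\colon G\to A$. The paper never evaluates $\tau$ at a non-central element: writing $d=o(gA)$ and taking a transversal $S$ of $A\langle g\rangle$ in $G$, it forms $z=\prod_{s\in S}(g^{d})^{s}$ (already visibly a product of $|G/A|$ conjugates), shows by a regrouping computation that $z^{d}=\prod_{t\in T}(g^{d})^{t}$ with $T$ a transversal of $A$ in $G$, observes as in the proof of Theorem~\ref{Thm.ab.by.finite} that this last product is central with torsion image in $G^{ab}$, hence trivial by Lemma~\ref{transfer}, and concludes $z=1$ from torsion-freeness of $A$. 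You instead identify the product of conjugates as literally the value of the transfer: the cycle-decomposition evaluation of $\tau$ (which is exactly \cite[10.1.2]{Rob}, the same result behind Lemma~\ref{transfer}), combined with normality of $A$ forcing every cycle to have the common length $d$, gives $\tau(g)=\prod_{j=1}^{r}(g^{d})^{u_j}$ with $rd=|G/A|$; and $\tau\equiv 1$ because $\tau$ factors through the finite group $G^{ab}$ and its image is a finite subgroup of the torsion-free group $A$. Your bookkeeping is right: with a right transversal and $t_ig=a_it_{\sigma(i)}$ the telescoping yields $t_ig^{\ell}t_i^{-1}=(g^{\ell})^{t_i^{-1}}$, a conjugate of the positive power, and all orbits $\bar t_i\langle\bar g\rangle$ have size $d$. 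What the two approaches buy: the paper only needs the easy special case $\tau(z)=z^{n}$ for central $z$, at the price of the $z^{d}$ regrouping trick plus a torsion-freeness argument; you need the full evaluation lemma for the transfer, but then the bound falls out with no computation, and your remark that $\tau\equiv 1$ when $G^{ab}$ is finite is precisely the global form of Lemma~\ref{transfer}. Both arguments express the identity as a product of \emph{exactly} $|G/A|$ conjugates of each $g$, which is what the definition of $\expg$ requires.
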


\begin{proof}
Let $g \in G$ and let $n$ be the order of $gA \in G/A$. Let $S \subset G$ be a transversal for $A \cdot \langle g \rangle$ in $G$.  We will show that the element
\[ z = \prod_{s \in S} (g^n)^s\]
is trivial. Notice that $z$ is a product of $n \cdot |S| = |G/A|$ conjugates of $g$, which will prove the claim.

It will be enough to show that $z^n = 1$, since $z \in A$ and $A$ is torsion-free. We have:
\[ z^n = \prod_{s \in S} (g^{n^2})^s = \prod_{s \in S} \left( \prod_{i=0}^{n-1}(g^n)^{g^i}\right)^s = \prod_{t \in T} (g^n)^t,\]
where $T = \{g^i s \mid  0\leq i < n, s \in S\}$. But $g^n \in A$ and $T$ is a transversal for $A$ in $G$, so $\prod_{t \in T} (g^n)^t = 1$ by the argument in the proof of Theorem~\ref{Thm.ab.by.finite}. 
\end{proof}

We will see that the bound is sharp for the Promislow group and its generalizations in the sequence.

\subsection{Examples} \label{examples.abelianbyfinite}
In this Section we will provide some examples of applications of Theorem~\ref{Thm.ab.by.finite}.

\subsubsection{Crystallographic groups}

Let $G$ be an $n$-dimensional crystallographic group, i.e. a discrete and cocompact subgroup of $\operatorname{Iso}(\R^n) = O(n) \ltimes \R^n$, the group of isometries of an $n$-dimensional euclidean space. By the first Bieberbach theorem, $G$ fits into an exact sequence
\[1 \to A \to G \to Q \to 1, \]
where $Q$ is a finite group and $A$ is a free abelian group of rank $n$ and it is the unique maximal abelian (normal) subgroup of $G$.  The groups $Q$ and $A$ are called the \emph{holonomy group} and the \emph{translation group} of $G$, respectively. When $G$ is additionally torsion-free, it is called a \emph{Bieberbach group}.

All crystallographic groups fall into the scope of Theorem~\ref{Thm.ab.by.finite}. The most interesting cases for us are the  Bieberbach groups of trivial center. Equivalently, these are the Bieberbach groups with finite abelianization (see \cite[Prop.~1.4]{HillerSah}). 

\begin{cor} \label{cor:Bieber}
There are Bieberbach groups $G$ of finite generalized exponent satisfying any of the following properties:
\begin{enumerate}
    \item $G$ is solvable of derived length $l$, for any $l \geq 2$;
    \item $G$ is of arbitrarily large generalized exponent;
    \item $G$ is non-solvable.
\end{enumerate}
\end{cor}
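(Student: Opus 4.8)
The plan is to reduce all three assertions to the existence of suitable finite \emph{primitive} groups, and to realize these as holonomy groups via the theorem of Hiller and Sah \cite{HillerSah}: a finite group $Q$ is the holonomy group of a Bieberbach group with trivial center (equivalently, with vanishing first Betti number) if and only if $Q$ is primitive. By Theorem~\ref{thm.abelianbyfinite}(2)--(3) together with Proposition~\ref{upperbound.genexp}, a Bieberbach group $G$ has finite generalized exponent if and only if $Z(G)=1$. Hence it suffices, in each case, to exhibit a primitive finite group $Q$ of the required kind and to take for $G$ any Bieberbach group with holonomy $Q$ and trivial center. Since $G$ is an extension $1 \to A \to G \to Q \to 1$ with $A$ free abelian, $G$ is solvable exactly when $Q$ is, and $\mathrm{dl}(G) \in \{\mathrm{dl}(Q), \mathrm{dl}(Q)+1\}$.

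Parts (2) and (3) are then immediate. For (3), any nonabelian finite simple group is primitive, since a normal complement to a cyclic Sylow subgroup would be a proper nontrivial normal subgroup; taking $Q = A_5$ yields a non-solvable Bieberbach group of trivial center, hence of finite generalized exponent. For (2), rather than estimating a generalized exponent directly I would invoke the Gupta--Sidki groups $K(p)$, which are Bieberbach with trivial center and satisfy $\expg(K(p)) = p^2$ by the Corollary to Theorem~\ref{thm.bounds}; letting $p \to \infty$ produces Bieberbach groups of finite but unbounded generalized exponent. The lower bound $\exp(G^{ab}) \le \expg(G)$ of Theorem~\ref{thm.bounds} gives the same conclusion from any family with $\exp(G^{ab}) \to \infty$.

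For part (1) I would, for each $d \ge 1$, take a non-cyclic finite $2$-group $Q_d$ of derived length $d$ --- for instance an iterated wreath product $C_2 \wr \cdots \wr C_2$, whose derived length increases with the number of factors. A non-cyclic $p$-group has no cyclic Sylow subgroup whatsoever, so the primitivity condition holds vacuously and $Q_d$ is primitive. Hiller--Sah then yields a solvable Bieberbach group $G_d$ with holonomy $Q_d$ and trivial center, with $\mathrm{dl}(G_d) \in \{d, d+1\}$.

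The main obstacle is to pin down this derived length exactly, so as to realize \emph{every} value $l \ge 2$ and not merely a cofinal set. I would resolve it by showing that one can always arrange $\mathrm{dl}(G_d) = d+1$, i.e. $G_d^{(d)} \neq 1$. The key point is that the holonomy representation of a crystallographic group is faithful, so the last nontrivial derived subgroup $Q_d^{(d-1)}$ acts nontrivially on $A$; a commutator computation in $\mathrm{Iso}(\R^n)$ between elements of $G_d^{(d-1)}$ then produces a nontrivial element of $G_d^{(d)} \le A$. The base case is the Promislow group, where $Q = C_2 \times C_2$ has derived length $1$ and $P$ has derived length $2$. Granting $\mathrm{dl}(G_d) = d+1$ for all $d \ge 1$ and letting $d$ range over the positive integers then realizes every derived length $l \ge 2$, completing (1). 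Carrying out the commutator computation in full generality is where the real work lies.
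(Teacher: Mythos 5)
Your overall strategy is exactly the paper's: invoke the Hiller--Sah classification to realize primitive finite groups as holonomy groups of centerless (equivalently, finite-abelianization) Bieberbach groups, and then choose the holonomy case by case. Parts (2) and (3) are handled correctly: the paper takes $Q=C_n\times C_n$ and gets the lower bound on the generalized exponent from Proposition~\ref{prop_basic}(3), while you instead quote $\expg(K(p))=p^2$; this is an equally valid variant, and there is no circularity, since Proposition~\ref{expg.Cid} depends only on Proposition~\ref{upperbound.genexp} and Proposition~\ref{prop_basic}, not on this corollary.

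The place where you go beyond the paper is part (1), and your instinct is sound: one does need to rule out $\mathrm{dl}(G_d)=\mathrm{dl}(Q_d)$ in order to hit every value $l\geq 2$ exactly, and the paper is silent on this point. However, the commutator computation you defer as ``the real work'' is both unnecessary and, as sketched, slightly off: knowing that $Q_d^{(d-1)}$ acts nontrivially on $A$ does not by itself produce two non-commuting elements of $G_d^{(d-1)}$, because the commutators you can form lie in $[G_d^{(d-1)}\cap A,\,G_d^{(d-1)}]$, and a priori $Q_d^{(d-1)}$ could act trivially on the sublattice $G_d^{(d-1)}\cap A$ even while acting nontrivially on $A$. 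The clean fix is a one-line argument from the version of Bieberbach's first theorem already recalled in Section~\ref{examples.abelianbyfinite}: $A$ is the \emph{unique maximal} abelian normal subgroup of $G_d$, so (since every abelian normal subgroup is contained in a maximal one) every abelian normal subgroup of $G_d$ lies in $A$. Hence if $G_d^{(d)}=1$, then $G_d^{(d-1)}$ is an abelian normal subgroup, so $G_d^{(d-1)}\leq A$, and its image $Q_d^{(d-1)}$ in $G_d/A$ would be trivial, contradicting $\mathrm{dl}(Q_d)=d$. Thus $\mathrm{dl}(G_d)=d+1$ holds automatically for any solvable holonomy group, and every derived length $l\geq 2$ is realized. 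One last small point: for $l=2$ your iterated wreath product degenerates to the cyclic group $C_2$, which is not primitive; one must take a non-cyclic abelian $2$-group such as $C_2\times C_2$, i.e.\ the Promislow group, exactly as in your base case.
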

\begin{proof}
By Hiller and Sah's classification, it enough to find primitive groups  $Q$ with the appropriate property. Going case by case, we choose $Q$ to be:
\begin{enumerate}
    \item a $p$-group of derived length $l-1$;
    \item $C_n \times C_n$. Notice that if $G$ is a Bieberbach group with holonomy group $Q = C_n \times C_n$, then its generalized exponent is at least $n$ by Proposition~\ref{prop_basic}(3);
    \item a non-abelian simple group. \qedhere
\end{enumerate}
\end{proof}

\subsubsection{Promislow group and its generalizations}  \label{PromislowGuptaSidki}
The Promislow group is defined by the presentation
\[ P = \langle x, y \mid (x^2)^y = x^{-2}, (y^2)^x = y^{-2}\rangle.\]
The normal subgroup $A = \langle x^2, y^2, (xy)^2 \rangle$ is free abelian of rank $3$, and the quotient $P/A$ is isomorphic to $C_2 \times C_2$, while $P/P' \simeq C_4 \times C_4$.

In order to define its generalizations, we will use the following notation.  For $f(T) = n_0 + n_1 T + \ldots + n_k T^k$ a polynomial with integer coefficients and $g$ and $h$ group elements, we denote
\[ g^{f(h)} = (g^{n_0})(g^{n_1})^h \cdots (g^{n_k})^{h^k}.\] 
For $p$ a prime and $m,n \in \N$, let
\[ K(p^n, p^m) = \langle x, y \mid [[x,y], x^{p^n}], [ [x,y],y^{p^m}], (x^{p^n})^{\Psi_{p^m}(y)}, (y^{p^m})^{\Psi_{p^n}(x)} \rangle_{\text{metab}},\] 
where $\Psi_k(T) = 1 + T + \ldots + T^{k-1}$ and \emph{metab} indicates a presentation in the variety of metabelian groups. These groups were
defined by Cid \cite{Cid}. The cases $K(p) \coloneqq K(p^1,p^1)$ is the family of groups considered by Gupta and Sidki \cite{GuptaSidki1999}. The Promislow group coincides with $K(2) = K(2,2)$. 

It is proved in \cite{GuptaSidki1999, Cid} that each $K(p^n,p^m)$ is a Bieberbach group with holonomy $C_{p^n} \times C_{p^m}$ and abelianization isomorphic to $C_{p^{m+n}} \times C_{p^{m+n}}$.

\begin{prop}  \label{expg.Cid}
Cid group $K(p^n,p^m)$ has generalized exponent $p^{m+n}$. In particular, the Promislow group has generalized exponent $4$, and the Gupta--Sidki group $K(p)$ has generalized exponent $p^2$.
\end{prop}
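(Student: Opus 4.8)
The plan is to deduce the statement directly from the quantitative bounds of Theorem~\ref{thm.bounds}, observing that for the groups $K(p^n,p^m)$ the resulting lower and upper bounds happen to coincide. I would begin by recalling the structural facts quoted from \cite{GuptaSidki1999, Cid}: the group $G = K(p^n,p^m)$ is a Bieberbach group whose translation subgroup $A$ is free abelian of finite index and normal in $G$, with holonomy quotient $G/A \simeq C_{p^n} \times C_{p^m}$, and whose abelianization is $G^{ab} \simeq C_{p^{m+n}} \times C_{p^{m+n}}$. In particular $G$ is a finitely generated abelian-by-finite group and $G^{ab}$ is finite, so the hypotheses of Theorem~\ref{thm.bounds} are met with this choice of $A$.

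Applying Theorem~\ref{thm.bounds} then gives
\[ \exp(G^{ab}) \leq \expg(G) \leq |G/A|. \]
The two outer terms are computed from the facts above: the exponent of $C_{p^{m+n}} \times C_{p^{m+n}}$ is $p^{m+n}$, while $|G/A| = |C_{p^n} \times C_{p^m}| = p^{n+m}$. Since both bounds equal $p^{m+n}$, the generalized exponent is squeezed to $\expg(G) = p^{m+n}$, and the special cases $\expg(P) = \expg(K(2,2)) = 4$ and $\expg(K(p)) = \expg(K(p^1,p^1)) = p^2$ follow by substituting the relevant values of $n$ and $m$.

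I do not expect any substantial obstacle here: all the real content has been front-loaded into Theorem~\ref{thm.bounds}, whose upper bound is Proposition~\ref{upperbound.genexp} and whose lower bound follows from Proposition~\ref{prop_basic}(3). The only points requiring care are verifying that the translation subgroup legitimately serves as the required free abelian normal subgroup $A$ of finite index (which is guaranteed by the first Bieberbach theorem) and importing the identification of $G^{ab}$ from \cite{Cid}. Should one wish to avoid quoting the abelianization outright, an alternative route to the lower bound $p^{m+n} \leq \expg(G)$ is to apply Proposition~\ref{prop_basic}(3) directly, exhibiting an element of $G$ whose image in $G^{ab}$ has order $p^{m+n}$; but given the cited structure this refinement is unnecessary.
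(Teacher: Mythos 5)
Your proposal is correct and is essentially the paper's own argument: the paper likewise gets the upper bound $p^{m+n}$ from Proposition~\ref{upperbound.genexp} applied to the translation subgroup with holonomy quotient $C_{p^n}\times C_{p^m}$, and the lower bound from Proposition~\ref{prop_basic}(3) via elements of order $p^{m+n}$ in the abelianization. Packaging these two bounds as Theorem~\ref{thm.bounds} before squeezing is only a cosmetic difference.
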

\begin{proof}
Since the holonomy of $K(p^n,p^m)$ is $C_{p^n} \times C_{p^m}$, it follows by Proposition~\ref{upperbound.genexp} that its generalized exponent is at most $p^{m+n}$. On the other hand, the abelianization $K(p^n,p^m)^{ab}$ has elements of order $p^{m+n}$, so Proposition \ref{prop_basic}(3) gives that $\expg(K(p^n,p^m)) \geq p^{m+n}$.
\end{proof}

Recall that a positive integer $n$ is \emph{powerful} if for any prime $p$, one has $p\mid n$ implies $p^2 \mid n$.

\begin{prop}  \label{powerful}
If $n$ is powerful, then there exists a Bieberbach group $G$ with generalized exponent $n$.    
\end{prop}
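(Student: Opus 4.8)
The plan is to realize $n$ as the generalized exponent of a direct product of Cid groups. Since $n$ is powerful, we may write $n = \prod_{i=1}^{r} p_i^{a_i}$ with the $p_i$ distinct primes and each $a_i \geq 2$ (if $n=1$ the empty product gives the trivial group). For each $i$ the decomposition $a_i = 1 + (a_i - 1)$ has both summands positive, so the Cid group $G_i = K(p_i^1, p_i^{a_i-1})$ is defined; by the facts recalled in Subsection~\ref{PromislowGuptaSidki} it is a Bieberbach group with holonomy $C_{p_i} \times C_{p_i^{a_i-1}}$ (hence translation subgroup $A_i$ of index $p_i^{a_i}$) and abelianization $C_{p_i^{a_i}} \times C_{p_i^{a_i}}$, and by Proposition~\ref{expg.Cid} it satisfies $\expg(G_i) = p_i^{a_i}$. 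I would then set $G = G_1 \times \cdots \times G_r$ and prove that $\expg(G) = n$.

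First I would record that $G$ is again a Bieberbach group: a direct product of torsion-free crystallographic groups is a torsion-free crystallographic group (acting on the product Euclidean space), with translation subgroup $A = A_1 \times \cdots \times A_r$ of index $|G/A| = \prod_i p_i^{a_i} = n$. Since $G^{ab} = \prod_i G_i^{ab}$ is finite, Theorem~\ref{thm.abelianbyfinite}(2) gives $\Tg(G) = G$, and Proposition~\ref{upperbound.genexp} gives the upper bound $\expg(G) \leq |G/A| = n$. It then remains only to exhibit one element of generalized order $n$.

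For the lower bound the key device is a semigroup description of the admissible lengths. For $g \in G$ put $S_g = \{ k \geq 1 : 1 \in C^k\}$, where $C$ is the conjugacy class of $g$; then $o_\bullet(g) = \min S_g$, and $S_g$ is closed under addition, since concatenating a product of $a$ conjugates equal to $1$ with a product of $b$ conjugates equal to $1$ produces a product of $a+b$ conjugates equal to $1$. In each factor I would choose $g_i \in G_i$ whose image in $G_i^{ab}$ has order $p_i^{a_i}$, which exists because $G_i^{ab} \cong C_{p_i^{a_i}} \times C_{p_i^{a_i}}$. By Proposition~\ref{prop_basic}(3) every element of $S_{g_i}$ is divisible by $p_i^{a_i}$, so $S_{g_i} \subseteq p_i^{a_i}\N$; on the other hand $\expg(G_i) = p_i^{a_i}$ forces $p_i^{a_i} \in S_{g_i}$, and closure under addition then yields $S_{g_i} = p_i^{a_i}\N$ exactly. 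Because conjugation and multiplication in $G_1 \times \cdots \times G_r$ are computed coordinatewise, a given length works for $g = (g_1, \ldots, g_r)$ if and only if it works simultaneously in every coordinate, so $S_g = \bigcap_i S_{g_i} = \bigcap_i p_i^{a_i}\N = n\N$, the last equality holding because the $p_i^{a_i}$ are pairwise coprime. Hence $o_\bullet(g) = \min S_g = n$, giving $\expg(G) \geq n$ and, with the upper bound, $\expg(G) = n$.

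The only genuinely delicate point is this lower bound, and within it the coordinatewise identity $S_g = \bigcap_i S_{g_i}$: one must confirm that a product of conjugates of $(g_1, \ldots, g_r)$ is trivial precisely when the corresponding product of conjugates is trivial in each factor, which is immediate from coordinatewise computation. Everything else reduces to the already-established structure of the Cid groups and to the elementary arithmetic of the numerical semigroups $S_{g_i}$, so I do not anticipate further obstacles.
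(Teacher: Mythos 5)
Your proof is correct, and the core construction is identical to the paper's: $G = \prod_i K(p_i, p_i^{a_i-1})$, using that $n$ powerful makes every exponent $a_i \geq 2$, so each factor is a well-defined Cid group with $\expg(G_i)=p_i^{a_i}$ by Proposition~\ref{expg.Cid}. The differences are in how the two bounds are extracted. For the upper bound the paper invokes Lemma~\ref{extension} (submultiplicativity of the generalized exponent in extensions), giving $\expg(G) \leq \prod_i \expg(G_i) = n$, whereas you apply Proposition~\ref{upperbound.genexp} to the translation lattice $A = \prod_i A_i$ of index $n$; both are one-line deductions from results already in the paper. For the lower bound the paper simply notes that $G^{ab} = \prod_i G_i^{ab}$ has exponent $n$ and uses $\exp(G^{ab}) \leq \expg(G)$ (Theorem~\ref{thm.bounds}, i.e.\ Proposition~\ref{prop_basic}(3)), while you run a numerical-semigroup computation: $S_{g_i} = p_i^{a_i}\N$, then $S_g = \bigcap_i S_{g_i} = n\N$ via the coordinatewise description of conjugation in a direct product. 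Your route is longer but yields slightly more information: the exact set of admissible lengths for $g$, hence an element of generalized order exactly $n$; ultimately it rests on the same fact as the paper's argument, namely that the order of the image of $g$ in $G^{ab}$ divides every admissible length. One small citation slip: Proposition~\ref{prop_basic}(3) as stated only asserts that $o(\pi(g_i))$ divides the generalized order $o_{\bullet}(g_i) = \min S_{g_i}$, not that it divides \emph{every} element of $S_{g_i}$ (a set can contain non-multiples of its minimum). The stronger claim you need is immediate from projecting a trivial product of $k$ conjugates to the abelianization, which is exactly the proof of that proposition, but you should invoke that argument rather than the bare statement.
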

\begin{proof}
Let $n = \prod p_i^{k_i}$ be its prime factorization. For each $i$, let $G_i=K(p_i,p_i^{k_i-1})$, and let $G = \prod G_i$. 

By Proposition~\ref{expg.Cid}, each $G_i$ has generalized exponent $p_i^{k_i}$, so $G$ has generalized exponent at most $\prod p_i^{k_i} = n$ by Lemma~\ref{extension}. Similarly, since $G_i^{ab}$ has exponent $p_i^{k_i}$, the abelianization of $G$ has exponent  $\prod p_i^{k_i} = n$. So $\expg(G) = n$.
\end{proof}

\begin{rem}
We sketch an argument to prove that non-trivial Bieberbach groups cannot have generalized exponent $2$. Suppose that $G$ is such a group and let $A$ be its translation subgroup. Then for each $a \in A$, there exists $g \in G$ such that $a^g = a^{-1}$. By \cite[Proposition~2.1]{HillerSah}, the action of $\langle g \rangle$ on $A$ must have a non-trivial fixed-point. In particular, there is no $g \in G$ such that $a^g  = a^{-1}$ for all $a \in A$. For $g \in G$, put $A_g = \{a \in A \mid a^g = a^{-1}\} \leq A$. Then $A = \cup A_g$, where $g$ runs through a transversal of $A$ in $G$. By Neumann's theorem \cite{Neumann}, this is impossible unless some $A_g$ is of finite index in $A$, which is not the case. 
\end{rem}

\subsubsection{Free abelianized extensions}
For $Q$ a finite group, written as a quotient $Q = F/R$ of a free group of finite rank, we can consider the free abelianized extension
\[ 1 \to R/[R,R] \to F/[R,R] \to Q \to 1.\]
It is an extension of $Q$ by the free abelian group $R/[R,R]$, which is of rank $|Q|(\mathrm{rk}(F) - 1) + 1$. It is well known that $G = F/[R,R]$ is torsion-free. The abelianization $G^{ab} = F/[F,F]$ is free abelian, so $\Tg(G) = G'$ by  Theorem~\ref{Thm.ab.by.finite}.

\subsubsection{Wreath products}
A wreath product of the form $G = \Z \wr Q = \Z Q \rtimes Q$ with $Q$ finite has abelianization $\Z \times Q^{ab}$, and the abelianization map is $\pi (m,q) = (\epsilon(m), qQ')$, where $\epsilon \colon \Z Q \to \Z$ is the augmentation map. So $\Tg(G) = Aug(\Z Q) \rtimes Q$.
 
\section{Positive generalized identities}  \label{sec.positive.id}
Let $n$ be a positive integer. Following Endimioni \cite{Endimioni2006}, we denote by $\widehat{\mathcal{B}}_n$ the class of groups $G$  satisfying an identity of the form
\begin{equation} \label{positive.id} g^{x_1} \cdots g^{x_n}=1, \  \ \forall g \in G,\end{equation}
where $x_1, \ldots, x_n$ are \emph{fixed} elements of $G$, and we say that \eqref{positive.id} is a positive generalized identity in $G$. Groups in $\widehat{\mathcal{B}}_n$ are clearly of finite generalized exponent. Note also that $\widehat{\mathcal{B}}_n$ contains the Burnside variety $n$, denoted here by $\mathcal{B}_n$.

\subsection{Positive generalized identities in abelian-by-finite groups}
Note that the argument of Churkin in \cite{Kourovka} already shows that the Promislow group $P$ satisfies the positive generalized identity
\[ g^2 (g^2)^x (g^2)^y  (g^2)^{xy} = 1\]
of degree $8$. In the same direction, the proof of Theorem~\ref{Thm.ab.by.finite} actually shows that an abelian-by-finite group satisfies a positive generalized identity if and only if it is of finite generalized exponent. The bounds are as follows.

\begin{cor} \label{cor.strongexp}
Let $G$ be a finitely generated abelian-by-finite group. Assume that $A$ is a torsion-free abelian normal subgroup of finite index. If $G^{ab}$ is finite, then $G \in \widehat{\mathcal{B}}_n$ where $n =\exp(G/A)\cdot |G/A|$.
\end{cor}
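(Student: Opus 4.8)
The plan is to produce explicit fixed conjugators from a single transversal. Set $e = \exp(G/A)$ and fix once and for all a transversal $T = \{t_1,\ldots,t_m\}$ of $A$ in $G$, so that $m = |G/A|$ and $n = e\cdot m$. For an arbitrary $g \in G$ the element $g^e$ lies in $A$, since $e$ kills the image of $g$ in $G/A$. The heart of the matter is the observation, already established in the proof of Theorem~\ref{Thm.ab.by.finite}, that for every $a \in A$ one has $\prod_{t\in T} a^t = 1$; I would apply this with $a = g^e$ to obtain $\prod_{t \in T}(g^e)^t = 1$ for all $g$.

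The second step is purely formal: rewrite each factor as $(g^e)^t = (g^t)^e$, a product of $e$ copies of the conjugate $g^t$. Expanding, $\prod_{t \in T}(g^e)^t$ becomes a product of $e\cdot m = n$ conjugates of $g$ in which each conjugator $t_i$ occupies $e$ consecutive positions. Taking $x_1=\cdots=x_e=t_1$, $x_{e+1}=\cdots=x_{2e}=t_2$, and so on, I obtain an identity $g^{x_1}\cdots g^{x_n}=1$ valid for every $g\in G$. The decisive point is that these $x_j$ come from the fixed transversal $T$ and the fixed integer $e$, hence are independent of $g$; this is exactly what upgrades the conclusion from finite generalized exponent to membership in $\widehat{\mathcal{B}}_n$.

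For completeness I would recall why $\prod_{t\in T} a^t=1$ for $a\in A$, following Theorem~\ref{Thm.ab.by.finite}: since $A$ is abelian and normal, all factors $a^t$ lie in $A$ and commute, the product is independent of the choice of transversal, and conjugating it by any element of $G$ merely permutes the cosets, so it is central; its image in the finite group $G^{ab}$ is then torsion, and Lemma~\ref{transfer} forces it to be trivial.

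I expect no serious obstacle here; the only thing to watch is the bookkeeping that separates this statement from Proposition~\ref{upperbound.genexp}. There the sharper bound $|G/A|$ is obtained using a transversal of $A\langle g\rangle$, which varies with $g$ and therefore cannot furnish \emph{fixed} conjugators; using the fixed transversal of $A$ instead costs the extra factor $\exp(G/A)$ and yields exactly the degree $n=\exp(G/A)\cdot|G/A|$ claimed.
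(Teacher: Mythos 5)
Your proof is correct and follows essentially the same route as the paper: fix a transversal of $A$ in $G$, note $g^{\exp(G/A)}\in A$, and invoke the vanishing of the product of conjugates over the transversal (via centrality and Lemma~\ref{transfer}, using finiteness of $G^{ab}$) to get a positive generalized identity of degree $\exp(G/A)\cdot|G/A|$ with conjugators independent of $g$. Your closing remark correctly identifies the only subtlety, namely that the sharper transversal of $A\langle g\rangle$ used in Proposition~\ref{upperbound.genexp} depends on $g$ and so cannot yield fixed conjugators.
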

\begin{proof}
Arguing as in the proof of Proposition~\ref{upperbound.genexp}, we see that for $g \in G$ we have
\[ (g^k)^{x_1} (g^k)^{x_2} \cdots (g^k)^{x_m}=1,\]
where $k = \exp(G/A)$ and $\{x_1, \ldots, x_m\}$ is a transversal for $A$ in $G$.
\end{proof}

In \cite{Endimioni2006}, Endimioni studied the classes  $\widehat{\mathcal{B}}_n$ for small values of $n$. He showed that  $\mathcal{B}_n$ and $\widehat{\mathcal{B}}_n$ coincide for $n \leq 2$ and $\mathcal{B}_3 \subsetneq \widehat{\mathcal{B}}_3  \subsetneq \mathcal{B}_9$, and that finitely generated groups in $\widehat{\mathcal{B}}_4$ are abelian-by-finite, but cannot be torsion-free.
Moreover, he observes that the Promislow group belongs to $\widehat{\mathcal{B}}_8$. The case of prime degree was studied in \cite{ADFG}, where it it is shown that any finitely generated solvable group in $\widehat{\mathcal{B}}_p$ is a finite $p$-group. We obtain the following related result.

\begin{thm}  \label{BHat}
Let $n$ be a positive integer. If $p^3$ divides $n$ for some prime $p$, then $\widehat{\mathcal{B}}_{n}$ contain torsion-free groups. In particular, $\mathcal{B}_{n}  \subsetneq \widehat{\mathcal{B}}_{n}$.
\end{thm}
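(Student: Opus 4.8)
The plan is to exhibit, for the prime $p$ with $p^3 \mid n$, a single concrete torsion-free group lying in $\widehat{\mathcal{B}}_n$. The natural candidate is the Gupta--Sidki group $K(p) = K(p^1,p^1)$, which is a Bieberbach group (hence torsion-free) with holonomy $C_p \times C_p$ and finite abelianization $C_{p^2} \times C_{p^2}$. The whole argument will rest on the fact that $K(p)$ realizes exactly the degree $p^3$ via Corollary~\ref{cor.strongexp}, combined with an elementary monotonicity of the classes $\widehat{\mathcal{B}}_m$.

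First I would record the monotonicity observation $\widehat{\mathcal{B}}_m \subseteq \widehat{\mathcal{B}}_{mk}$ for every $k \geq 1$. Indeed, if $g^{x_1} \cdots g^{x_m} = 1$ holds identically for fixed $x_1,\ldots,x_m$, then concatenating $k$ copies of the tuple $(x_1,\ldots,x_m)$ produces fixed elements $y_1,\ldots,y_{mk}$ for which $g^{y_1}\cdots g^{y_{mk}} = (g^{x_1}\cdots g^{x_m})^k = 1$ for all $g$. Thus membership in $\widehat{\mathcal{B}}_m$ is inherited by every multiple of $m$.

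Next I would apply Corollary~\ref{cor.strongexp} to $G = K(p)$ with $A$ its translation subgroup. Since $G/A \simeq C_p \times C_p$ we have $\exp(G/A) = p$ and $|G/A| = p^2$, so the corollary gives $K(p) \in \widehat{\mathcal{B}}_{p\cdot p^2} = \widehat{\mathcal{B}}_{p^3}$ (the hypothesis $G^{ab}$ finite holds since $K(p)^{ab} \simeq C_{p^2}\times C_{p^2}$). Writing $n = p^3 k$ and invoking the monotonicity just established yields $K(p) \in \widehat{\mathcal{B}}_n$. As $K(p)$ is torsion-free, this already proves that $\widehat{\mathcal{B}}_n$ contains torsion-free groups.

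For the final assertion, I would note that every group in the Burnside variety $\mathcal{B}_n$ has exponent dividing $n$ and is therefore a torsion group, whereas the nontrivial group $K(p)$ is torsion-free; combined with the inclusion $\mathcal{B}_n \subseteq \widehat{\mathcal{B}}_n$ recorded earlier, this forces $\mathcal{B}_n \subsetneq \widehat{\mathcal{B}}_n$. I do not expect a genuine obstacle here: once one recognizes that $K(p)$ lands precisely in degree $p^3$, the only care needed is the routine bookkeeping in the monotonicity step.
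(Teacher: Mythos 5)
Your proposal is correct and follows essentially the same route as the paper: reduce to $\widehat{\mathcal{B}}_{p^3}$ via the inclusion $\widehat{\mathcal{B}}_{p^3} \subseteq \widehat{\mathcal{B}}_n$, then apply Corollary~\ref{cor.strongexp} to $G = K(p)$ with $\exp(G/A)\cdot|G/A| = p\cdot p^2 = p^3$. The only difference is that you spell out details the paper leaves implicit (the concatenation proof of monotonicity, the verification of the corollary's hypotheses, and the strictness $\mathcal{B}_n \subsetneq \widehat{\mathcal{B}}_n$ from torsion-freeness), all of which are accurate.
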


\begin{proof}
Since $p^3$ is a divisor of $n$, it follows that $\widehat{\mathcal{B}}_{p^3} \subseteq \widehat{\mathcal{B}}_{n}$. So, it is sufficient to prove that $\widehat{\mathcal{B}}_{p^3}$ contain torsion-free groups. Set $G=K(p)$. By Corollary \ref{cor.strongexp}, $G \in\widehat{\mathcal{B}}_{p^3}$, which completes the proof.
\end{proof}

It is worth to mention that, by \cite{Endimioni2006}, there are no torsion-free groups in $\widehat{\mathcal{B}}_{4}$. In \cite{ADFG}, the authors provides finitely generated metabelian groups in $\widehat{\mathcal{B}}_{n^2}$, for every positive integer $n$. However, these groups are not torsion-free. In this direction, it seems natural to ask whether there are torsion-free groups in $\widehat{\mathcal{B}}_{p^k}$, for  $p$ odd and $k\leqslant 2$.

\subsection{Torsion-free groups related to Casolo's example}

It is clear that if $G$ is a finitely generated metabelian group in some $\widehat{\mathcal{B}}_{n}$, then $G$ is polycyclic. The same is not true for solvable groups with derived length $d\geq 3$, as shown in the example by Casolo in \cite[Section~5]{Endimioni2006}. His example is a semidirect product of $\Z D_{\infty}$ by $D_{\infty} \times C_2$, which contains torsion. We will provide a family of generalizations, producing finitely generated torsion-free groups which are not polycyclic but belong to some $\widehat{\mathcal{B}}_{n}$.

We start with any finitely generated torsion-free group $G$ in some $\widehat{\mathcal{B}}_{m}$  and such that $|G^{ab}|$ is even (e.g. the Promislow group). We form the group
\[ \Gamma_G  = (\Z G) \rtimes (G \times G)\] 
where the first copy of $G$ acts as product on the left and the second copy acts by inversion (via some homomorphism $\pi \colon G \to C_2$).

If $g^{x_1} \cdots g^{x_m}=1$ is a positive generalized identity in $G$, then there exist elements $y_1, \ldots, y_m \in \Gamma_G$ such that $\gamma^{y_1} \cdots \gamma^{y_m} \in \Z G \subseteq \Gamma_G$ for all $\gamma \in \Gamma_G$. By choosing $\sigma \in G \times G$ that acts on $\Z G$ by inversion, we find that 
\[  \gamma^{y_1} \cdots \gamma^{y_m} \gamma^{y_1 \sigma } \cdots \gamma^{y_m \sigma} = 1 \]
is a positive generalized identity in $\Gamma_G$. So we produced a finitely generated torsion-free group satisfying a positive generalized identity and containing a copy of $\Z \wr G$. 

We may apply this construction to build a sequence $\{G_n\}_{n=1}^{\infty}$ where $G_1 = P$ the Prosmislow group and $G_{n+1} = \Gamma_{G_n}$ (for $n \geq 1$). This produces finitely generated torsion-free groups satisfying positive generalized identities and whose derived lengths increase with $n$. Moreover, $G_n$ is not polycyclic for $n \geq 2$; compare with \cite[Corollary~4.1]{Endimioni2006}.

\section*{Acknowledgements}
We are grateful to Sam Corson for useful discussions regarding orderable groups. This work was partially supported by  ``Conselho
Nacional de Desenvolvimento Cient\'ifico e Tecnol\'ogico – CNPq'', FAPDF and FAPEMIG [APQ-02750-2].
%%%%%%%%%%%%%%%%%%%%%%%%%%%%%. BIBLIOGRAPHY

\end{document}